\title[]{Polarized K3 surfaces with an automorphism of order 3 and low Picard number}
\author{Dino Festi}
\address{Dipartimenti di matematica ``Tullio Levi-Civita'', Universit\`a degli studi di Padova, 35121, Padova, Italy.}
\email{dino.festi@unipd.it}
\date{\today}
\begin{document}

\begin{abstract}
    In this paper, for each $d>0$, we study the minimum integer $h_{3,2d}\in \mathbb{N}$ for which there exists a complex polarized K3 surface $(X,H)$ of degree $H^2=2d$ and Picard number $\rho (X):=\mathrm{rank} \Pic X = h_{3,2d}$ admitting an automorphism of order $3$. 
    We show that $h_{3,2}\in\{ 4,6\}$ and $h_{3,2d}=2$ for $d>1$.
    Analogously, we study the minimum integer   $h^*_{3,2d}\in \mathbb{N}$  for which there exists a complex polarized K3 surface $(X,H)$ as above plus the extra condition that the automorphism acts as the identity on the Picard lattice of $X$.
    We show that $h^*_{3,2d}$ is equal to $2$ if $d>1$ and equal to $6$ if $d=1$.
    We provide explicit examples of K3 surfaces defined over $\mathbb{Q}$ realizing these bounds.
\end{abstract}

\maketitle

\section{Introduction}

The study of automorphisms of K3 surfaces has seen a very intense activity in the last 40 years.
In~\cite{Nik76, Ste85} Nikulin and Stark proved that a group acting purely non-symplectically on an algebraic K3 is cyclic and finite.
More in particular, Nikulin proves that such a group can have order at most 66; if the group has prime order, then its maximal order is 19, \cite[Theorem 0.1.c), Corollary 3.2]{Nik80}.
In these notes we consider non-symplectic automorphisms of order $3$, a topic extensively treated in~\cite{AS08, Tak11}.
In particular, we focus on the interplay between the existence of non-symplectical automorphism of order $3$, a polarization of given degree, and the Picard number of the surface, as already done in~\cite{FNP22} for non-symplectic involutions.

More precisely, let $(X,H)$ denote a complex polarized K3 surface of degree $2d$, that is, $H$ is an ample divisor of $X$ and $H^2=2d$.
Assume that $X$ admits an automorphism $\alpha\in \Aut X$ of prime order $p$.
Then $\alpha$ induces an action $\alpha^*$ on $H^{2,0}(X)=\langle \omega \rangle$, and hence $\alpha^*=\zeta \omega$, with $\zeta^p=1$.
In this paper we focus on the case $p=3$.
In this case, if $\zeta=1$ then $\alpha$ is called \emph{symplectic} and  $\rho (X)\geq 13$, see \cite[\S 10]{Nik80};
if $\zeta\neq 1$, that is, $\zeta$ is a primitive $3$-rd root of unity  then  $\alpha$ is called \emph{non-symplectic} and 
$\rho (X)\geq 2$, see \cite{AS08, Tak11}.
As done in~\cite{FNP22}, one may ask when is this lower bound realized depending on the degree of the polarization.

\begin{definition}\label{d:cH32d}
Let 
$$
\cH_{3,2d}:=\{ (X,H,\alpha) \}
$$ 
denote the set of complex polarized K3 surfaces $(X,H)$ such that $H^2=2d>0$ and $X$ admits an automorphism $\alpha$ of  order $3$,
one can then define
$$
h_{3,2d}=\min_{X\in \cH_{3,2d} } \{ \rho (X) \}\; .
$$
\end{definition}

In this work we prove the following result.
\begin{theorem}\label{t:Main}
    If $d>1$, then $h_{3,2d}=2$. 
    For $d=1$, we have $h_{3,2d}=h_{3,2}\in\{4,6\}$.
\end{theorem}
To prove that $h_{3,2d}=2$ for every $d>1$ we will only consider \emph{non-symplectic} automorphisms because, as noted above, symplectic automorphisms force the Picard number to be higher than desired.
We first show that if $X$ is a K3 surface of degree $2d$ with a (non-symplectic) automorphism of order $3$, 
then  $\rho (X)\geq 2$.
Then we complete the proof by providing an explicit example of a K3 surface with an automorphism of order $3$, a polarization of degree $2d>2$ and Picard number~$2$.
To provide such example for every $d>1$, it is enough to consider a single K3 surface with an automoprhism of order $3$ and Picard lattice isometric to $U=\begin{pmatrix}
    0 & 1\\
    1 & 0
\end{pmatrix}$, because this lattice admits ample divisors of degree $2d$ for every $d>1$.

The above argument leaves open the case for $d=1$.
Indeed for $h_{3,2}$ we only have a partial answer.
The partiality of this result is due to the wide range of possibilities that arise when studying the Picard lattice of a K3 surface with a non-symplectic automorphism of order $3$ and a polarization of degree $2$.
In this case,  on the one hand it is easy to show that the Picard number has to be larger than $2$ and that there are examples of such surfaces with Picard number $6$; 
on the other hand, it is hard to control the ample cone of all the possible Picard lattices with rank $4$ and we could neither find in the literature nor construct examples of K3 surfaces of degree $2$ and Picard number $4$ admitting an automorphism of order $3$. See \autoref{r:Missingh32} for more details.

The impasse can be overcome if we allow for one extra hypothesis, in the spirit of~\cite{Tak11}: we assume that $\alpha$ acts as the identity on the Picard lattice.
As we will see in \autoref{s:SecondProof}, this is equivalent to considering `generic' K3 surfaces with the desired properties.
This extra assumption leads to the following definitions.
\begin{definition} We define
    $$
\cH_{3,2d}^*:=\{ (X,H,\alpha)\in \cH_{3,2d} \; | \; \alpha^*_{| \Pic X}=\mathrm{id}  \}
$$ 
and 
$$
  h^*_{3,2d}=\min_{X\in \cH_{3,2d}^* } \{ \rho (X) \}\; .
$$
\end{definition}
Clearly $h^*_{3,2d}\geq h_{3,2d}$.
We then prove the following result.
\begin{theorem}\label{t:MainStar}
    The following equalities hold:
    \[
    h^*_{3,2d}=
    \begin{cases}
        6 &\textrm{ if } d=1\, ,\\
        2 &\textrm{ if } d>1\, .
    \end{cases}
    \]
\end{theorem}
The first equality follows almost immediately from the first statement of~\autoref{t:Main};
the second equality builds upon the second statement: in this case we only have two possible lattices of rank $4$ and we show that none of them admits a polarization of degree $2$.
The crucial ingredient for the proofs of all the above results is the classification of the fixed locus of an automorphism of order $3$, provided by Artebani and Sarti in~\cite{AS08}, see \autoref{t:AS}.
\begin{remark}\label{r:OtherOrders}
    The problem treated in this paper naturally generalizes to  any prime order $p$:
    for any prime $p$ one can define $\cH_{p,2d}$ and $h_{p,2d}$ substitutig $3$ with $p$ in \autoref{d:cH32d}.
    In~\cite{Nik80}, Nikulin proves that if $\alpha$ is a non-symplectic automorphism of prime order $p$ on a K3 surface, then $p\leq 19$, see~\cite[Theorem 3.1(c)]{Nik80}.
    Using the classification of non-symplectic automorphisms of prime order by Artebani, Sarti, Taki~\cite{AST11}, one might try to compute $h_{p,2d}$ for every prime $p\leq 19$ and for every $d\geq 1$.
    This is indeed a joint work in progress with Wim Nijgh and Pablo Quezada Mora.
\end{remark}

The paper is structured as follows:
in~\autoref{s:K3with} we briefly review the background of complex K3 surfaces with an automorphism of order $3$;
in~\autoref{s:d2} we prove \autoref{t:Main};
finally, \autoref{t:MainStar} is proved in~\autoref{s:SecondProof}. 

\section*{Acknowledgments} 
While working on this subject I  benefited from conversations with Alice Garbagnati, Bert van Geemen and Remke Kloosterman.
I would also thank Bartosz Naskręcki, Wim Nijgh and Pablo Quezada Mora for their contribution to the final part of the work.
I am also grateful to the anonymous referee for pointing out a crucial mistake in the previous version of this paper.
The author was supported by the PRIN grant \texttt{PRIN202022AGARB\char`_01} while at Università degli Studi di Milano.

\section{Projective K3 surfaces with a non-symplectic automorphism of order 3}\label{s:K3with}
There are several works on complex K3 surfaces with an automorphism of order $3$, in these notes we will mostly use the results presented in~\cite{AS08}.

Let $X$ be a complex projective K3 surface and assume it admits an automorphism $\alpha\in \Aut X$ of order $3$.
Also assume that $\alpha$ is non-symplectic.
Hence $\alpha^3=1$ and $\alpha^*(\omega)=\zeta\omega$, where $\omega$ is the class generating $H^{2,0}(X)$ and $\zeta$ is a primitive third root of unity.
In what follows, $\zeta$ will always denote a primitive third root of unity.
Recall that if $L$ is a lattice, we denote by $L^*:=\Hom(L,\IZ)$ its \emph{dual lattice} and by 
$A_L=L^*/L$ its \emph{discriminant group}.
Notice that $\alpha$ induces an isometry of the lattice $H^2(X,\IZ)$, which we will denote by $\alpha^*$.
As $\Pic X$ can be viewed as a sublattice of $H^2(X,\IZ)$, we will denote by $\alpha^*$ also the isometry of $\Pic X$ induced by $\alpha$.

\begin{definition}
    We define $N(\alpha):=(H^2(X,\IZ))^{\alpha^*}$, the sublattice of $H^2(X,\IZ)$ fixed by $\alpha^*$.
\end{definition}

\begin{definition}
    Let $\cE=\IZ [\zeta]$ denote the ring of Eisenstein integers. A $\emph{$\cE$-lattice}$ is a couple $(L,\sigma)$ where $L$ is a lattice and $\sigma$ is a fixed-point-free isometry of order $3$ on $L$.
    If $\sigma$ acts as the identity on $A_L$, then $(L,\sigma)$ is called an \emph{$\cE^*$-lattice}.
\end{definition}

\begin{proposition}\label{p:Nalpha}
    Let $(X,\alpha)$ be a complex K3 surface with a non-symplectic automorphism of order~$3$.
    Then
    \begin{enumerate}
        \item $N(\alpha)$ is a primitive $3$-elementary sublattice of $\Pic X$;
        \item $(N(\alpha)^\perp, \alpha^*)$ is a $\cE^*$-lattice, where $N(\alpha)^\perp$ is the orthogonal complement of $N(\alpha)$ inside $H^2(X,\IZ)$;
        \item $(T_X,\alpha^*)$ is a $\cE$-sublattice of $N(\alpha)^\perp$, where $T_X$ denotes the transcendental lattice of $X$.
    \end{enumerate}
\end{proposition}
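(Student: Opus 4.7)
All three claims rest on the eigenspace decomposition of $\alpha^*$ together with the unimodularity of $H^2(X,\IZ)$, and I would exploit this throughout. Since $(\alpha^*)^3=\mathrm{id}$, over $\IQ$ one has $H^2(X,\IQ)=N(\alpha)_\IQ\oplus N(\alpha)_\IQ^\perp$, and on the second summand $\alpha^*$ has eigenvalues $\zeta,\bar\zeta$ only, so $1+\alpha^*+(\alpha^*)^2$ vanishes there. Consequently, for every $v\in H^2(X,\IZ)$ the identity $(1+\alpha^*+(\alpha^*)^2)(v)=3\pi_N(v)$ holds, where $\pi_N$ denotes the rational orthogonal projection onto $N(\alpha)_\IQ$; this identity will be the engine of the argument.

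For (1), I would first note that primitivity of $N(\alpha)$ in $H^2(X,\IZ)$ is automatic from torsion-freeness: $nv\in N(\alpha)$ forces $n(\alpha^*v-v)=0$ and hence $\alpha^*v=v$. The inclusion $N(\alpha)\subseteq\Pic X$ follows because any $\alpha^*$-fixed class is orthogonal to $\omega$ and $\bar\omega$ (eigenvectors for $\zeta,\bar\zeta\neq 1$), hence is of type $(1,1)$ and thus algebraic by Lefschetz. For the $3$-elementary property, the identity above already gives $3\pi_N(H^2(X,\IZ))\subseteq N(\alpha)$; to conclude, I would invoke the standard fact that, because $H^2(X,\IZ)$ is unimodular and $N(\alpha)$ is primitive, the projection $\pi_N$ actually surjects onto $N(\alpha)^*$. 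This upgrades the inclusion to $3N(\alpha)^*\subseteq N(\alpha)$, which is precisely the $3$-elementary condition.

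For (2), $N(\alpha)^\perp$ is $\alpha^*$-stable because $\alpha^*$ is an isometry; the induced action is point-free since a fixed element would lie in $N(\alpha)\cap N(\alpha)^\perp=\{0\}$; and triviality on $A_{N(\alpha)^\perp}$ I would read off from the $\alpha^*$-equivariant unimodular gluing isomorphism $A_{N(\alpha)}\cong A_{N(\alpha)^\perp}$, the left-hand side being fixed because $\alpha^*$ acts as the identity on $N(\alpha)$ itself. For (3), I would observe that $\alpha^*$ preserves algebraic classes, hence $\Pic X$, and therefore also $T_X=\Pic(X)^\perp$; combined with $N(\alpha)\subseteq\Pic X$ from (1) this gives $T_X\subseteq N(\alpha)^\perp$. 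The induced action on $T_X$ has order exactly $3$ because $\omega\in T_X\otimes\IC$ is a genuine $\zeta$-eigenvector, and it is point-free because a fixed vector of $T_X$ would lie in $N(\alpha)\cap T_X\subseteq\Pic X\cap T_X=\{0\}$.

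The only genuinely delicate step will be the $3$-elementary claim in (1): one needs both the primitivity of $N(\alpha)$ and the unimodularity of $H^2(X,\IZ)$ to promote the obvious inclusion $3\pi_N(H^2(X,\IZ))\subseteq N(\alpha)$ to $3N(\alpha)^*\subseteq N(\alpha)$. Everything else is formal once the eigenspace decomposition and the standard discriminant-form machinery are in place.
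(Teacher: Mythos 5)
Your proof is correct, but it is worth noting that the paper does not actually argue this proposition at all: its ``proof'' is a citation of \cite[Theorem 0.1]{Nik76}, \cite[Lemma 1.1]{MO98} and \cite[Theorem 1.4]{AS08}. Your self-contained argument is therefore a genuinely different route, and it is essentially the standard one underlying those references. The two devices you use are exactly the right ones: the averaging operator $1+\alpha^*+(\alpha^*)^2$, which annihilates the $\zeta$- and $\bar\zeta$-eigenspaces and so shows $3\pi_N(H^2(X,\IZ))\subseteq N(\alpha)$, upgraded to $3N(\alpha)^*\subseteq N(\alpha)$ by the surjectivity of the orthogonal projection from a unimodular lattice onto the dual of a primitive nondegenerate sublattice; and the $\alpha^*$-equivariance of Nikulin's gluing isomorphism $A_{N(\alpha)}\cong A_{N(\alpha)^\perp}$, which transports the trivial action on $A_{N(\alpha)}$ to $A_{N(\alpha)^\perp}$. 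One point you should make explicit rather than leave implicit: both the step $N(\alpha)\cap N(\alpha)^\perp=\{0\}$ and the identification of the image of $\pi_N$ with $N(\alpha)^*$ inside $N(\alpha)\otimes\IQ$ require $N(\alpha)$ to be nondegenerate; this does follow from your setup, since the eigenspace decomposition is orthogonal and the form on $H^2(X,\IZ)$ is nondegenerate, so its restriction to the $1$-eigenspace is nondegenerate, but it deserves a sentence. What the citation buys the paper is brevity; what your argument buys is that one sees exactly where unimodularity of $H^2(X,\IZ)$ and primitivity of $N(\alpha)$ enter --- which is the content the paper later relies on when it invokes $3$-elementarity and the evenness of $\rank T_X$.
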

\begin{proof}
  This is the reformulation of~\cite[Theorem 0.1]{Nik76} and~\cite[Lemma 1.1]{MO98} as in~\cite[Theorem 1.4]{AS08}.
\end{proof}

\begin{lemma}\label{l:Elattice}
    The following statements hold:
    \begin{enumerate}
        \item Any $\cE$-lattice has even rank;
        \item Any $\cE^*$-lattice is $3$-elementary.
    \end{enumerate}
\end{lemma}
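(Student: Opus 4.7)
My plan is to analyse both parts by exploiting the polynomial identity satisfied by a point-free isometry of order $3$, namely $1+\sigma+\sigma^{2}=0$.

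For part (1), the first step is to pin down the minimal polynomial $m_{\sigma}(x)\in\IZ[x]$. Since $\sigma^{3}=1$, we have $m_{\sigma}\mid x^{3}-1=(x-1)\Phi_{3}(x)$, with $\Phi_{3}(x)=x^{2}+x+1$. The point-free hypothesis states that $\ker(\sigma-1)=0$ on $L$, and hence on $L\otimes\IQ$, so $(x-1)\nmid m_{\sigma}$. Because $\sigma$ has order exactly $3$, we must have $m_{\sigma}=\Phi_{3}$. Next I would use this to view $L$ as a module over $\IZ[x]/(\Phi_{3})\cong\cE$. Since $\cE$ is a principal ideal domain (even Euclidean) and $L$ is torsion-free as a $\IZ$-module, it is torsion-free, hence free, as an $\cE$-module. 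If $r$ denotes its $\cE$-rank, then its $\IZ$-rank is $2r$, which is even.

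For part (2), I would extend $\sigma$ to the dual lattice $L^{*}\subseteq L\otimes\IQ$; by (1), the relation $1+\sigma+\sigma^{2}=0$ continues to hold there. Fix $x\in L^{*}$. The hypothesis that $\sigma$ acts as the identity on $A_{L}=L^{*}/L$ gives $\sigma(x)\equiv x\pmod{L}$, and applying $\sigma$ once more yields $\sigma^{2}(x)\equiv\sigma(x)\equiv x\pmod{L}$. Adding these three congruences gives
\[
0=(1+\sigma+\sigma^{2})(x)\equiv 3x\pmod{L},
\]
so $3x\in L$. Since this holds for every $x\in L^{*}$, every element of $A_{L}$ is annihilated by $3$, which is the definition of $A_{L}$ being $3$-elementary.

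I do not expect a genuine obstacle: both statements reduce to elementary module theory over $\cE$ once the minimal polynomial is identified. The only point that deserves care is the first step of (1), where one must explicitly rule out the eigenvalue $1$ via the point-free hypothesis rather than relying only on $\sigma^{3}=\mathrm{id}$; without this observation one cannot promote $L$ to an $\cE$-module and the parity of the rank is not forced.
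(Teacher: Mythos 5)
Your proof is correct. The paper states this lemma without any proof (it is a known fact, implicit in the setup of \cite{AS08}), so there is no argument of the author's to compare against; your route is the standard one and is complete: the point-free hypothesis forces the minimal polynomial to be $x^2+x+1$, which makes $L$ a finitely generated torsion-free, hence free, module over the PID $\cE$ and gives even $\IZ$-rank, and summing the congruences $x\equiv\sigma(x)\equiv\sigma^2(x)\pmod{L}$ gives $3x\in L$ for all $x\in L^*$, i.e.\ $A_L$ is an $\mathbb{F}_3$-vector space. The only implicit step worth spelling out is that $L$ is $\cE$-torsion-free, which follows by multiplying a relation $av=0$ by the conjugate $\bar a$ to reduce to $\IZ$-torsion-freeness via the norm $N(a)=a\bar a\in\IZ_{>0}$.
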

\begin{proof}
    This is~\cite[Lemma 1.3]{AS08}.
\end{proof}

\begin{corollary}\label{c:evenrho}
    Let $(X,\alpha)$ be a complex K3 surface with a non-symplectic automorphism of order $3$.
    Then $\rho (X)$ and $\rank N(\alpha)$ are even.
\end{corollary}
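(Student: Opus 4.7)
The plan is to extract the parity of $\rho(X)$ from the rank of the transcendental lattice $T_X$, using Proposition~\ref{p:Nalpha}(3) to equip $T_X$ with the structure of an $\cE$-lattice and Lemma~\ref{l:Elattice}(1) to control its rank.

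For a complex projective K3 surface, $\Pic X$ and $T_X$ are mutually orthogonal primitive sublattices of $H^2(X,\IZ)$, and their ranks sum to $22$; hence
\[
\rho(X) \;=\; 22 - \mathrm{rank}\, T_X.
\]
By Proposition~\ref{p:Nalpha}(3), the pair $(T_X,\alpha^*|_{T_X})$ is an $\cE$-lattice, and by Lemma~\ref{l:Elattice}(1) every $\cE$-lattice has even rank. Therefore $\mathrm{rank}\, T_X$ is even, and so is $\rho(X)$.

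There is essentially no obstacle beyond invoking the two preceding results, which do all the substantive work. The non-symplectic hypothesis enters only through Proposition~\ref{p:Nalpha}(3), which ensures that $\alpha^*$ acts on $T_X$ as a point-free order-$3$ isometry (otherwise a nonzero $\alpha^*$-fixed class in $T_X$ would force trivial action on $H^{2,0}(X)$, contradicting non-symplecticity). One could alternatively proceed via Proposition~\ref{p:Nalpha}(2) together with Lemma~\ref{l:Elattice}(2), but that only yields the parity of $\mathrm{rank}\, N(\alpha)$ inside $H^2(X,\IZ)$ and does not directly control $\rho(X)$; routing through $T_X$ is the cleaner approach.
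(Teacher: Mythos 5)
Your proof is correct and is essentially identical to the paper's: both pass through Proposition~\ref{p:Nalpha}(3) to make $(T_X,\alpha^*)$ an $\cE$-lattice, apply Lemma~\ref{l:Elattice}(1) to get that $\mathrm{rank}\,T_X$ is even, and conclude via $\rho(X)=22-\mathrm{rank}\,T_X$. No issues.
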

\begin{proof}
    By~\autoref{p:Nalpha} we have that $(T_X,\alpha^*)$ is a $\cE$-lattice.
    Then from~\autoref{l:Elattice} it follows that $\rank T_X$ is even.
    As $\rho (X) = 22-\rank T_X$, we conclude the argument. 

    The same argument applied to $N(\alpha)^\perp$ shows that $\rank N(\alpha)$ is also even.
\end{proof}

The main result of~\cite{AS08} is the complete classification of the K3 surfaces $(X,\alpha)$ in terms of the fixed loci $\Fix \alpha\subset X$ and $N(\alpha)\subset \Pic X$.
Moreover, for each case they also provide a projective model realizing it.
Their results can be summarized as follows.

\begin{theorem}{\cite[Theorems 2.8 and 3.4]{AS08}}\label{t:AS}
    Let $(X,\alpha)$ be a complex K3 surface with an automorphism $\alpha$ of order $3$. 
    Then $\Fix \alpha$ consists of $n\leq 9$ points and $k\leq 6$ curves.
    The couple $(n,k)$ uniquely determines $N(\alpha)$.
    All the possible triples $(n,k,N(\alpha))$ are listed in~\cite[Table 2]{AS08}.

    Conversely, for every triple $(n,k,N(n,k))$  in~\cite[Table 2]{AS08} there exists a complex K3 surface $X_{n,k}$ with a non-symplectic automorphism $\alpha$ of order $3$ such that $\Fix \alpha$ consists of $n$ points and $k$ curves and $\Pic X_{n,k} = N(\alpha) \cong N(n,k)$.
    For each triple $(n,k,N(n,k))$ a projective model of such $X_{n,k}$ is given.
\end{theorem}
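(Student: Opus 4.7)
The plan is to combine the Lefschetz fixed-point formulas for $\alpha$ with Nikulin's lattice-theoretic classification of primitive $3$-elementary sublattices of the K3 lattice, and then exhibit a projective model realizing each admissible triple.

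\emph{Step 1: Constraints from Lefschetz formulas.} Since $\alpha$ has prime order, $\Fix \alpha$ is smooth and decomposes as $n$ isolated points together with $k$ smooth disjoint curves $C_1,\ldots,C_k$ of genera $g_1,\ldots,g_k$. Writing $r=\rank N(\alpha)$, the eigenvalues of $\alpha^*$ on $H^2(X,\mathbb{C})$ are $1$ with multiplicity $r$ and $\zeta,\bar\zeta$ each with multiplicity $(22-r)/2$, so the topological Lefschetz number equals $(3r-18)/2$. Equating this with $n+\sum_i(2-2g_i)$ gives one linear relation; applying the holomorphic Lefschetz formula to $\mathcal{O}_X$ gives a second relation involving the same data together with $\sum_i g_i$. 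A short analysis forces all but at most one fixed curve to be rational, and bounds the remaining genus.

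\emph{Step 2: Lattice enumeration.} By~\autoref{p:Nalpha}, $N(\alpha)$ is a primitive hyperbolic $3$-elementary sublattice of $\Lambda_{K3}=U^{\oplus 3}\oplus E_8^{\oplus 2}$, so it is determined up to isomorphism by its rank $r$ and by the integer $a$ with $A_{N(\alpha)}\cong(\mathbb{Z}/3\mathbb{Z})^a$. Nikulin's criteria for the existence of a primitive embedding of a given discriminant form into $\Lambda_{K3}$ bound $a$ in terms of $r$ and cut the possibilities down to a finite list $N(r,a)$; by~\autoref{l:Elattice}(2) the orthogonal complement is likewise $3$-elementary with the opposite-sign discriminant form, so it is pinned down as well.

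\emph{Step 3: Matching $(n,k)$ with $(r,a)$.} I would link the fixed-locus invariants to the lattice invariants through the quotient $\pi\colon X\to X/\langle\alpha\rangle$ and its minimal resolution $Y$. Each isolated fixed point contributes an $A_2$-configuration on $Y$ and each fixed curve contributes a copy of itself; together with $\pi_*N(\alpha)$ these generate a sublattice of $\Pic Y$ whose rank and discriminant can be read off, producing the explicit dictionary $(n,k)\leftrightarrow (r,a)$ that, when combined with Step~1, yields the finite table.

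\emph{Step 4: Realization.} For the converse I would exhibit, in each case, an explicit model on which a non-symplectic order-$3$ automorphism is visible, typically as a cyclic triple cover of a rational surface branched along an $\alpha$-invariant curve (e.g.\ sextics in $\mathbb{P}^2$, or suitable divisors in Hirzebruch surfaces), or as a Weierstrass elliptic K3 surface with $\alpha:(x,y)\mapsto(\zeta x,y)$ forced by special singular fibres. The main obstacle is not constructing a surface with the desired automorphism, but verifying that its Picard lattice equals $N(n,k)$ and not a proper overlattice: this last step typically requires either a period-theoretic specialization argument or a direct transcendental-lattice computation, and must be done case by case.
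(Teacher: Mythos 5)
This theorem is not proved in the paper at all: it is stated as a quotation of \cite[Theorems 2.8 and 3.4]{AS08}, so there is no internal argument to compare yours against. What you have written is, in outline, a faithful reconstruction of how Artebani and Sarti themselves prove it: the topological and holomorphic Lefschetz formulas constrain $(n,k,\sum g_i)$ in terms of $r=\rank N(\alpha)$; Nikulin's theory of primitive embeddings of $p$-elementary lattices into $\Lambda_{K3}$ enumerates the admissible invariants $(r,a)$ and shows each pair is realized by a unique hyperbolic $3$-elementary lattice; and the converse is handled by explicit models (triple covers, double planes, Weierstrass fibrations). So strategically your plan is the right one and matches the source.

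That said, as a proof it is a plan rather than an argument: every quantitatively decisive step is deferred. You do not actually solve the two Lefschetz relations (so the bounds $n\leq 9$, $k\leq 6$ and the rationality of all but one fixed curve are asserted, not derived); you do not produce the dictionary $(n,k)\leftrightarrow(r,a)$, nor check that it is injective, which is exactly what the clause ``the couple $(n,k)$ uniquely determines $N(\alpha)$'' requires --- uniqueness needs both the injectivity of that dictionary and the fact that a hyperbolic $3$-elementary lattice is determined by $(r,a)$, the latter resting on Nikulin's uniqueness results which you invoke only implicitly. In Step~4 you correctly identify the genuinely hard point (showing the model has Picard lattice equal to $N(n,k)$ and not an overlattice), but you leave it entirely open; in \cite{AS08} this is settled by a dimension count on the corresponding moduli of $(N,\alpha^*)$-polarized periods, not case-by-case transcendental computations. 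One small inaccuracy: smoothness of $\Fix\alpha$ follows from finite order (local linearization), not from primality of the order. None of this makes the approach wrong, but as written the proposal would not compile into a proof without importing essentially all of the content of \cite{AS08}.
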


As we are only interested in K3 surfaces with low Picard number,
in \autoref{t:PossibleLEQ6} we only include the first entries of \cite[Table 2]{AS08}, omitting the transcendental lattice and indicating the type of projective model provided by Artebani and Sarti.

\begin{table}[h!]
\centering
\begin{tabular}{|c | c | c | l|}
 \hline
 $n$ & $k$ & $N$ & model for $X_{n,k}$ \\ [0.5ex] 
 \hline
 0 & 1 & $U(3)$ & Quadric $\cap$ cubic $\subset \IP^4$ \\ 
 ~ & 2 & $U$    & Weierstrass model\\
 \hline
 1 & 1 & $U(3)\oplus A_2$ & Quartic in $\IP^3$ \\
 ~ & 2 & $U\oplus A_2$    & Weierstrass model \\
 \hline
 2 & 1 & $U(3)\oplus A_2^{\oplus 2}$ & Double cover of $\IP^2$ \\
 ~ & 2 & $U\oplus A_2^{\oplus 2}$    & Weierstrass model\\ 
 \hline
\end{tabular}
\caption{Table of possible cases of $(n,k,N(\alpha))$ for $(X,\alpha)$ with $\rank N(\alpha)\leq 6$. In the last column we indicate the type of projective model provided in~\cite{AS08}.}
\label{t:PossibleLEQ6}
\end{table}

This result is very convenient because it tells us where to look in order to find polarized K3 surfaces of any degree admitting an automorphism of order $3$, as shown in the following sections.

\begin{remark}\label{r:Subfamily}
    The K3 surfaces with a given marking  and an automorphism of order $3$ form a \emph{subfamily} of K3 surfaces with the same marking.
    To see this, let $(X,\alpha)$ be a very generic complex K3 surface with  a non-symplectic automorphism $\alpha$ of order $3$.

    Let $V$ denote the $\IC$-vector space given by $H^2(X,\IZ)\otimes \IC$.
    Then $\alpha^*$ acts on $V$ and its action induces an orthogonal decomposition of $V$ into eigenspaces:
    $$
    V=V_1\oplus V_\zeta \oplus V_{\zeta^2}\; .
    $$
    As $\alpha$ is non-symplectic we can assume that $H^{0,2}(X)\subseteq V_{\zeta^2}$.
    We know that $N(\alpha )\subseteq \Pic X$ by~\autoref{p:Nalpha}.(1); 
    as we assumed $X$ to be very generic, we have that $\Pic X=N(\alpha)=V_1$ and
    hence $T_X\otimes \IC = V_\zeta \oplus V_{\zeta^2}$.
    As $V_\zeta$ and $V_{\zeta^2}$ are swapped by $\alpha^*$, they have the same dimension, and so we conclude that 
    $$
    \rank T_X = \dim (T_X\otimes \IC) = 2\dim V_\zeta.
    $$
    This means that if $(X,\alpha)$ is a K3 surface with an automorphism of order $3$, its period $\omega$ lies in $\IP (V_\zeta)$ which has dimension 
    $$
    \dim \IP (V_\zeta) = (\rank T_X)/2 -1 = 10-\rho (X)/2.
    $$
    
    On the other hand, if we just consider a marked K3 surface $X$ with $\Pic X\cong L$, without any other assumption, 
    then 
    the period of $X$ will lie in 
    $$
    \{ \omega \in \IP (T_X\otimes \IC) \; : \; \langle \omega, \omega \rangle = 0 \, , \, \langle \omega, \Bar{\omega} \rangle > 0\; \},
    $$
    which has dimension $\rank T_X-2=20-\rho (X)$.
\end{remark}

\section{Proof of the first theorem}\label{s:d2}
Throughout this section, 
let $X$ be the complex projective elliptic K3 surface defined by
\begin{equation}\label{eq:U}
  X\colon y^2=x^3+p(t)\; ,
\end{equation}
with $p(t)$ a polynomial of degree $12$  with only simple roots.
It is easy to see that the map $\alpha\colon (x,y,t)\mapsto (\zeta x, y, t)$ defines an automorphism of order $3$ on $X$.
We also assume $p(t)$, and hence $X$, to be generic.

\begin{lemma}\label{l:X32d}
    \begin{enumerate}
        \item $X$ admits an elliptic fibration;
        \item $\Pic X= \langle F, O \rangle \cong U$, where $F$ is the class of the fiber of the elliptic fibration and $O$ is the class of the unique section.
        \item $\alpha^*(F)=F$ and $\alpha^*(O)=O$, that is, $N(\alpha) = \Pic X$.
    \end{enumerate}
\end{lemma}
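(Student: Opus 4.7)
The plan is to deduce all three claims from the explicit Weierstrass form~\eqref{eq:U}, invoking~\autoref{p:Nalpha} only for the last one.

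For (1), the equation $y^2=x^3+p(t)$ is, by construction, a Weierstrass equation in $(x,y)$ over $\IP^1_t$; projection to $t$ gives the elliptic fibration $\pi\colon X\to\IP^1$. Since $p(t)$ has only simple roots, the discriminant $-27\,p(t)^2$ vanishes to order exactly $2$ at each root, so every singular fiber is irreducible of Kodaira type II.

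For (2), I take $F$ to be the class of a general fiber and $O$ the class of the zero section (the locus at infinity of every Weierstrass fiber). The intersection numbers $F^2=0$ and $F\cdot O=1$ are immediate from the geometry of the fibration, while $O^2=-2$ follows from adjunction, since $O\cong\IP^1$ on a K3. Hence $\langle F,O\rangle$ has Gram determinant $-1$ and $\langle F,O\rangle\cong U$; comparing with $\Pic X\cong U$, which has the same rank and discriminant, forces $\langle F,O\rangle=\Pic X$. The Shioda--Tate formula combined with $\rho(X)=2$ and the irreducibility of every singular fiber forces the Mordell--Weil rank to be $0$, and for generic $p(t)$ there is no torsion either, so $O$ is the unique section.

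For (3), by definition $\alpha^*$ acts as the identity on $N(\alpha)$. From~\autoref{p:Nalpha}(1) and~\autoref{t:AS} (the row $(n,k)=(0,2)$ of~\autoref{t:PossibleLEQ6}), $N(\alpha)$ is a rank-$2$ primitive $3$-elementary sublattice of $\Pic X\cong U$ with $N(\alpha)\cong U$; being a primitive rank-$2$ sublattice of a rank-$2$ lattice, it must coincide with $\Pic X$. Therefore $\alpha^*$ fixes $\Pic X=\langle F,O\rangle$ pointwise, and in particular $\alpha^*(F)=F$ and $\alpha^*(O)=O$. The only step requiring mild care is this identification $N(\alpha)=\Pic X$; everything else is a direct translation of standard facts about Weierstrass elliptic K3 surfaces.
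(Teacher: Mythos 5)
Your proof is correct, but for parts (1) and (2) it takes a genuinely different route from the paper. The paper argues abstractly from the lattice: since $U$ represents $0$, Riemann--Roch produces a genus-one fibration with fiber class $f$, and the $(-2)$-class $e-f$ (effective after a choice of sign) meets $f$ in one point, so it is the class of a section; the Gram matrix of $\langle O,F\rangle$ then gives $U$ directly. You instead read everything off the explicit Weierstrass model $y^2=x^3+p(t)$ of \autoref{e:U}: the fibration is the projection to $t$, the section is the one at infinity, and $\langle F,O\rangle=\Pic X$ follows by comparing discriminants; Shioda--Tate then gives uniqueness of the section. Your version is more concrete and also establishes the fiber types, but it only applies to the model of \autoref{e:U}, whereas the paper's argument applies to any K3 surface with $\Pic X\cong U$ (which is how the lemma is stated); note also that in your Shioda--Tate step the triviality of the Mordell--Weil group, torsion included, already follows from $\Pic X=\langle F,O\rangle$, so the appeal to genericity of $p(t)$ is unnecessary. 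For part (3) you and the paper make the same essential point, namely that $N(\alpha)$ is a primitive sublattice of $\Pic X$ of even rank at least $2$, hence equals $\Pic X$; your detour through the row $(n,k)=(0,2)$ of \autoref{t:PossibleLEQ6} is superfluous and slightly circular (identifying that row presupposes knowledge of the fixed locus), but the argument survives without it.
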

\begin{proof}
    The existence of the elliptic fibration is immediate from~\eqref{eq:U}. 
    It is also easy to see that the Gram matrix of $\langle  F, O \rangle$ is 
    \[\begin{pmatrix}
        0 & 1 \\
        1 & -2
    \end{pmatrix},\] 
    and hence $U\cong \langle  F,O \rangle\subseteq \Pic X$.
    As $X$ is assumed to be generic, we also have that $\langle  F,O \rangle = \Pic X$, see~\cite[Proposition 4.2, Theorem 5.6]{AS08}.
    The third statement is then just an immediate consequence of the second, as by~\cite[Proposition 4.2]{AS08} we have that $N(\alpha) \cong U \cong \Pic X$.
\end{proof}

\begin{proposition}\label{p:AmpleConeU}
    Let $e,f$ be the generators of $U\cong \Pic X$ such that $e^2=f^2=0$ and $e.f=1$.
    Then, up to a choice of signs, the ample cone of $\Pic X$ is given by the divisors $D=xe+yf$ such that $y>x>0$.
\end{proposition}
\begin{proof}
    Notice that in $U$ there are only two $-2$-classes: $\pm (e-f)$.
    Assume $O=(e-f)$ is effective. Hence $O$ is the only effective $-2$-curve of $S$.
    The positive cone of $X$ is given by divisors $xe+yf$ such that $xy>0$.
    Hence the ample cone is given by divisors $D=xe+yf$ such that $xy>0$ and $D.(e-f)>0$. As $D.(e-f)=-x+y$, we obtain the desired statement.
\end{proof}


We are now ready to prove the first theorem.
\begin{proof}[Proof of \autoref{t:Main}]
    To prove the first statement,
    consider $X$ defined in \eqref{eq:U}.
    Let $e,f$ denote two generators of $U\cong \Pic X$ as in \autoref{p:AmpleConeU}, 
     and consider the divisor $D=e+df$.
    Then, by~\autoref{p:AmpleConeU}, $D\in \Pic X$ is ample because $d>1$.
    As $D^2=2d$ and $X$ has an automorphism of order $3$ then $X\in\cH_{3,2d}$ and hence 
 $h_{3,2d}\leq 2$.
    As in general $h_{3,2d}\geq 2$ (it follows immediately from~\autoref{t:AS}), we conclude that $h_{3,2d}=2$.

    To prove the second statement, 
    consider $(Y,H,\alpha)\in \cH_{3,2}$ and let $N(\alpha)\subset \Pic Y$ be the fixed locus of $\alpha^*$.
    As the the fixed locus of $\alpha$ is not empty~\cite[Theorem 2.2]{AS08} we have that $\rho (Y) \geq \rank N(\alpha)\geq 2$.
    If $\rho (Y)=2$ it follows that $\Pic Y = N(\alpha)= U$ or $U(3)$ (\autoref{t:AS}).
    In both cases $\Pic Y$ does not admit an ample divisor of degree $2$, hence a contradiction with the hypothesis that $H^2=2$.
    From this it follows that $h_{3,2}>2$.

    On the other hand, from~\autoref{t:PossibleLEQ6} we see that there exists a complex K3 surface $Y$ with an automorphism $\sigma$ of order $3$ and $\Pic Y=N(\sigma) \cong U(3)\oplus A_2^{\oplus 2}$ 
    and this $Y$ can be realized as double cover of $\IP^2$, i.e., it admits a polarization of degree $2$.
    From this it follows that $h_{3,2}\leq 6$.

    As the Picard lattice of a K3 surface with a non-symplectic  automorphism of order $3$ is always even (\autoref{c:evenrho}), we conclude that $h_{3,2}\in \{ 4,6 \}$, proving the second statement.
\end{proof}

\begin{remark}\label{r:U}
    We are left to show that, for every integer $d>1$, the bound $h_{3,2d}=2$ can be attained over $\IQ$.
    In fact, a priori $X$ is only defined over $\IC$.
    One might expect that for a random choice of rational coefficients of $p(t)$ one still obtains a K3 surface with Picard number $2$.
    A practical problem arises though: computing the Picard number of a  K3 surface given in its Weierstrass form is not easy.

    Luckily we can use the beautiful K3 surface
    $$
      X_{66}\colon y^2=x^3+t(t^{11}-1)
    $$
    considered by Kondo in~\cite{Kon92}, that is defined over $\IQ$.
    This surface has the remarkable property of being the \emph{unique} K3 surfaces (up to isomorphism) to admit a $\IZ/66\IZ$-action, hence admitting an automorphism of order $3$.
    The Picard lattice of $X_{66}$ is indeed $U$~\cite[Example 3.0.1]{Kon92}.
\end{remark}

\begin{remark}\label{r:Missingh32}
    Assume $(Y,H,\alpha)$ is in $\cH_{3,2}$.
    As we will see in the next section, if 
    \[
        \Pic Y=N(\alpha)
    \] 
    then $\rho (Y)\geq 6$.
    Unfortunately, the above equality does not need to always hold, for instance see \autoref{e:NonGeneric}.
    Therefore at the moment we cannot exclude  that $\rho (Y)=4$.
    In this case, $\Pic Y$ is a finite index overlattice of $N(\alpha)\oplus L$, where $L$ is the orthogonal complement of $N(\alpha)$ in $\Pic Y$.
    From \autoref{t:PossibleLEQ6}, we know that $N(\alpha)$ is either $U$ or $U(3)$.
    Notice that if $N(\alpha) = U$, then $\Pic Y = N(\alpha)\oplus L$, see~\cite[Example 14.0.6]{Huy16}.
    Moreover, as $L$ is not contained in $N(\alpha)$, the isometry $\alpha^*$ of $\Pic Y$ induces an isometry of order $3$ on $L$.
    Hence $L$ is a negative definite lattice of rank $2$ with an isometry of order $3$:
    by \cite[Lemma 6.11]{LZ22}, it follows that $L\cong A_2(j)$ for some $j\geq 1$.
    This means that $\Pic Y$ is either $U\oplus A_2(j)$ or an overlattice of $U(3)\oplus A_2(j)$, for some value of $j\geq 1$.
    It is possible to show that for some values of $j$, the above lattices do not admit $2$-divisors, but we have been unable to reach a general understanding of the existence of ample $2$-divisors for all the values of $j$.
\end{remark}

\begin{example}\label{e:NonGeneric}
    It is not hard to construct an example of a K3 surface with an automorphism of order $3$ acting non-trivially on the Picard lattice.
    For example, consider an elliptic K3 surface $Z$ defined 
    as in~\eqref{eq:U}, with $p(t)$ with only simple roots and equal to $f_6^2-g_4^3$, for some $f_6$ and $g_4$ polynomials in $t$ of degree $6$ and $4$, respectively.
    Then $N(\alpha)\cong U$ and one
    can easily check that $Z$ has at least two sections, namely $(g_4,f_6,t)$ and $(\zeta_3g_4,f_6,t)$.
    We then conclude that $\rho (Z)\geq 4$ and $N(\alpha) = U\subsetneqq \Pic Z$.
\end{example}

\section{The proof of the second theorem}\label{s:SecondProof}

In this section assume that $(X,H,\alpha)$ is in $\cH^*_{3,2d}$,
that is, $X$ is a projective K3 surface with a polarization $H$ of degree $2d$ and an automorphism $\alpha$ of order $p$ acting as the identity on the whole $\Pic X$. 
Using the notation introduced in \autoref{s:K3with}, this means that $N(\alpha)= \Pic X$. 
It also means that $(X,\alpha)$ is generic in the moduli space of K3 surface with an automorphism $\sigma$ of order $3$ and fixed locus of $\sigma^*$ equal to $N(\alpha)$, see~\cite[Theorem 5.6]{AS08}.
The classification of the fixed locus of an order $3$ non-symplectic automorphism given in~\cite{AS08, Tak11} is the key in establishing $h^*_{3,2}$.

\begin{lemma}\label{l:h32dstar}
    For every $d>1$, one has $h^*_{3,2d}=2$.
\end{lemma}
\begin{proof}
    Recalling that $h^*_{3,2d}\geq h_{3,2d}$, \autoref{t:Main} implies that $h^*_{3,2d}\geq 2$.
    On the other hand, \autoref{l:X32d} shows that the surface defined in~\eqref{eq:U} is in $\cH^*_{3,2d}$ and has Picard number $2$, concluding the proof.
\end{proof}

\begin{remark}
    Kondo's surface mentioned in \autoref{r:U} shows that also the bound $h^*_{3,2d}$ can be attained over $\IQ$, for every $d>1$.
\end{remark}

We are left with case for $d=1$.

\begin{lemma}\label{l:hStar4}
    If $X\in\cH^*_{3,2d}$ has Picard number $4$, then $d>1$.
\end{lemma}
\begin{proof}
    From the hypothesis, using~\autoref{t:PossibleLEQ6}, it immediately follows that $\Pic X$ is either $U\oplus A_2$ or $U(3) \oplus A_2$.

    First we show that $U(3)\oplus A_2$ does not admit $2$-divisors at all.
    Let $u_1,u_2$ and $a_1,a_2$ denote the generators of $U(3)$ and $A_2$, respectively and let 
    $$
    D:=x_1u_1+x_2u_2+y_1a_1+y_2a_2
    $$
    a $2$-divisor,
    that is, $D^2=2$.
    Dividing by $2$, we get the following equality:
    \begin{equation}
        3x_1x_2-y_1^2+y_1y_2-y_2^2=1 \; .
    \end{equation}
    This can be rewritten as 
    \begin{equation}\label{eq:U3}
        3x_1x_2-1 = y_1^2-y_1y_2+y_2^2.
    \end{equation}
    Reducing modulo $3$, \eqref{eq:U3} induces the following equation:
    \begin{equation}\label{eq:U3mod3}
        y_1^2-y_1y_2+y_2^2 \equiv 2 \mod 3\; .
    \end{equation}
    It easy to see by direct computations that \eqref{eq:U3mod3} has no solutions in $\IZ/3\IZ$, proving the claim.

    Assume then that $\Pic X \cong U\oplus A_2$. This implies that $U\hookrightarrow N(\alpha) = \Pic  X$ and hence $X$ is elliptic and can be described by the following Weierstrass equation \cite[Proposition 4.2]{AS08}:
    $$
    y^2=x^3+p(t),
    $$
    where $p$ is a polynomial of degree $12$.
    As $\Pic X \cong U\oplus A_2$ we see that $X$ has only one reducible singular fiber, of Kodaira type IV. 
    This implies that $\Pic X$ is generated by the class of the fiber $F$, the class of the section $O$ and the two components $E_1, E_2$ of the reducible fiber not meeting $O$.
    Using these four generators, the Gram matrix of $\Pic X$ is the following.
    $$
    \begin{pmatrix}
        0 & 1 & 0 & 0\\
        1 & -2 & 0 & 0\\
        0 & 0 & -2 & 1\\
        0 & 0 & 1 & -2
    \end{pmatrix}
    $$
    Let us write $H=aO+fF+e_1E_1+e_2E_2$ and notice that the third component of the singular fiber can be written as $E_3=F-E_1-E_2$.
    Then $H^2=2$ implies 
    \begin{equation}\label{eq:UA2}
        af+e_1e_2=e_1^2+e_2^2+a^2+1\; .
    \end{equation}
    As $H$ is ample, its intersection with all the $-2$-curves is positive, that is, 
    $$
    \begin{cases}
        H.O&=f-2a>0\; ,\\
        H.E_1 &= -2e_1+e_2 >0\; ,\\
        H.E_2 &= e_1-2e_2 >0\; , \\
        H.E_3 &= a+e_1+e_2 >0\; .
    \end{cases}
    $$
    From the above inequalities we deduce that
    $$
    \begin{cases}
        0<2a < f\; ,\\
        e_1, e_2 <0 \; ,\\
        0<-e_1-e_2<a \; .
    \end{cases}
    $$
    Consider then the quantity $2a^2+1$.
    As $2a<f$ and $e_1e_2\geq 1$ we can write 
    $$
    2a^2+1 < af +e_1e_2 < af +3e_1e_2 \; .
    $$
    Using~\eqref{eq:UA2}, we can substitute $af+e_1e_2$, hence obtaining
    $$
    2a^2+1<e_1^2+e_2^2+a^2+1+2e_1e_2 = (-e_1-e_2)^2+a^2+1 < 2a^2+1 
    $$
    as $-e_1-e_2$ is strictly smaller than $a$.
    In this way we get 
    $$
    2a^2+1< 2a^2+1\; ,
    $$
    a contradiction, proving that $U\oplus A_2$ admits no ample $2$-divisors.
    This concludes the proof.
\end{proof}

\begin{lemma}\label{l:h32star}
    $h^*_{3,2}=6$.
\end{lemma}
\begin{proof}
    As $h^*_{3,2}\geq h_{3,2}$, from \autoref{t:Main} it follows that $h^*_{3,2}>2$.
    From \autoref{l:hStar4} we also know that $h^*_{3,2}\neq 4$, hence $h^*_{3,2}\geq 6$.
    As already noted in the proof of the second statement of \autoref{t:Main}, from~\autoref{t:PossibleLEQ6} we see that there exists a complex K3 surface $Y$ with an automorphism $\sigma$ of order $3$ and $\Pic Y=N(\sigma) \cong U(3)\oplus A_2^{\oplus 2}$ 
    and this $Y$ can be realized as double cover of $\IP^2$.
    From this it follows that $h^*_{3,2}= 6$.
\end{proof}
Now we have everything we need to prove the second theorem.
\begin{proof}[Proof of~\autoref{t:MainStar}]
The first equality is \autoref{l:h32star};
the second is \autoref{l:h32dstar}.
\end{proof}

\begin{remark}\label{r:6Existence}
    It is easy to show that the bound $h^*_{3,2}$ can be attained over $\IQ$.
    Indeed the paper~\cite{AS08} tells us how to find explicit examples of K3 surfaces of degree $2$ with Picard number equal to $6$,
    just by considering a surface as  \cite[Proposition 4.11]{AS08} that is generic enough.
    For example, consider the K3 surface $X_{2,1}$ given by the double cover of $\IP^2$ branched along the curve
    $$
      B\colon F_6(x_0,x_1)+F_3(x_0,x_1)x_2^3+bx_2^6
    $$
    with
    \begin{align*}
      F_6 &:=-x_0^6 + 2x_0^5x_1 - x_0^4x_1^2 - 2x_0^3x_1^3 - x_0^2x_1^4 + x_0x_1^5 - x_1^6\; ,\\
      F_3 &:=2x_0^2x_1 - x_1^3\; ,\\
      b &:= 2\; .
    \end{align*}
    From~\cite[Proposition 4.11]{AS08} we know that $\rho (X_{2,1})\geq 6$.
    By reducing modulo a prime of good reduction, e.g. 11, one can see that $\rho (X_{2,1})=6$ and hence $\Pic X_{2,1}\cong U\oplus A_2^{\oplus 2}$.
\end{remark}

\bibliographystyle{amsplain}
\bibliography{references}

\providecommand{\bysame}{\leavevmode\hbox to3em{\hrulefill}\thinspace}
\providecommand{\MR}{\relax\ifhmode\unskip\space\fi MR }
\providecommand{\MRhref}[2]{%
  \href{http://www.ams.org/mathscinet-getitem?mr=#1}{#2}
}
\providecommand{\href}[2]{#2}
\begin{thebibliography}{10}

\bibitem{AS08}
M.~Artebani and A.~Sarti, \emph{Non-symplectic automorphisms of order 3 on
  {$K3$} surfaces}, Math. Ann. \textbf{342} (2008), no.~4, 903--921.
  \MR{2443767}

\bibitem{AST11}
M.~Artebani, A.~Sarti, and S.~Taki, \emph{{$K3$} surfaces with non-symplectic
  automorphisms of prime order}, Math. Z. \textbf{268} (2011), no.~1-2,
  507--533, With an appendix by Shigeyuki Kond\={o}. \MR{2805445}

\bibitem{FNP22}
D.~Festi, W.~Nijgh, and D.~Platt, \emph{K3 surfaces with two involutions and
  low picard number}, preprint,
  \href{https://arxiv.org/abs/arXiv:2210.14623}{arXiv:2210.14623}, 2022.

\bibitem{Huy16}
D.~Huybrechts, \emph{Lectures on {K}3 surfaces}, Cambridge Studies in Advanced
  Mathematics, vol. 158, Cambridge University Press, Cambridge, 2016.
  \MR{3586372}

\bibitem{Kon92}
S.~Kond\={o}, \emph{Automorphisms of algebraic {$K3$} surfaces which act
  trivially on {P}icard groups}, J. Math. Soc. Japan \textbf{44} (1992), no.~1,
  75--98. \MR{1139659}

\bibitem{LZ22}
R.~Laza and Z.~Zheng, \emph{Automorphisms and periods of cubic fourfolds},
  Math. Z. \textbf{300} (2022), no.~2, 1455--1507 (English).

\bibitem{MO98}
N.~Machida and K.~Oguiso, \emph{On {$K3$} surfaces admitting finite
  non-symplectic group actions}, J. Math. Sci. Univ. Tokyo \textbf{5} (1998),
  no.~2, 273--297. \MR{1633933}

\bibitem{Nik76}
V.~V. Nikulin, \emph{Finite groups of automorphisms of {K}\"{a}hlerian surfaces
  of type {$K3$}.}, Uspehi Mat. Nauk (1976), no.~2(188), 223--224. \MR{409904}

\bibitem{Nik80}
\bysame, \emph{Finite automorphism groups of {K{\"a}hler} {K3} surfaces},
  Trans. Mosc. Math. Soc. \textbf{2} (1980), 71--135 (English).

\bibitem{Ste85}
H.~Sterk, \emph{Finiteness results for algebraic {$K3$} surfaces}, Math. Z.
  \textbf{189} (1985), no.~4, 507--513. \MR{786280}

\bibitem{Tak11}
S.~Taki, \emph{Classification of non-symplectic automorphisms of order 3 on
  {$K3$} surfaces}, Math. Nachr. \textbf{284} (2011), no.~1, 124--135.
  \MR{2752672}

\end{thebibliography}

\end{document}